\numberwithin{equation}{section}
\newcommand{\ZZ}{\mathbb Z}
\newcommand{\beq}{\begin{equation}}
\newcommand{\eeq}{\end{equation}}
\newcommand{\area}{\mathop{\rm Area\,}}
\newcommand{\card}{\mathop{\rm Card\,}}
\newcommand{\diam}{\mathop{\rm diam\,}}
\newcommand{\dist}{\mathop{\rm dist}}
  \def\cB{\mathcal{B}}  
\def\cF{\mathcal{F}}
\def\cG{\mathcal{G}}
\def\cN{\mathcal{N}}
\def\cZ{\mathcal{Z}}
 \newcommand{\lng}{\langle}
 \newcommand{\rng}{\rangle}
\newcommand{\vf}{{g_0}}
 \def\bC{\mathbb{C}}
 \def\bR{\mathbb{R}}
   \def\bZ{\mathbb{Z} }
  \def\cB{\mathcal{B}}  
\def\cF{\mathcal{F}}
\def\cG{\mathcal{G}}
\newcommand{\inv}{^{-1}}
\def\lr{L^2(\bR)}
\newcommand{\tfs}{time-frequency shift}
\newcommand{\fif}{if and only if}
\newtheorem{th1}{{\bf Theorem}}[section]
\newtheorem{lem}[th1]{{\bf Lemma}}
\newtheorem{prop}[th1]{{\bf Proposition}}
\newtheorem{cor}[th1]{{\bf Corollary}}
\theoremstyle{definition}
\newtheorem{rem}[th1]{{\bf Remark}}
\begin{document}
\author {A. Borichev}
\address{Centre de Math\'ematiques et Informatique, Universit\'e d'Aix-Marseille I, 39, rue Joliot Curie, 13453, Marseille Cedex
13, France}
 \email{borichev@cmi.univ-mrs.fr}

\author{K. Gr\"ochenig}
\address{Faculty of Mathematics \\
University of Vienna \\
Nordbergstrasse 15 \\
A-1090 Vienna, Austria}
\email{karlheinz.groechenig@univie.ac.at}

 \author{Yu. Lyubarskii}
\address{Department of Mathematical Sciences, Norwegian University of Science and Technology, NO-7491, Trondheim, Norway}
\email{yura@math.ntnu.no}

\title[ Frame constants  near the  critical density]
{  Frame Constants of Gabor Frames near the Critical Density}

\thanks{A.B. was partially supported by the ANR projects 
ANR-07-BLAN-0249 and ANR-09-BLAN-0058-01;
K.G. was  supported by the  Marie-Curie Excellence Grant
  MEXT-CT-2004-517154; Yu.L. was partly supported by the Research
Council of Norway, grants 160192/V30 and  177355/V30.}

\begin{abstract}
 We consider  Gabor frames  generated by a Gaussian function    and
 describe the behavior of  the frame constants as the  density of the
 lattice approaches the critical value. 
 \end{abstract}

\subjclass[2000]{ Primary 30H05; Secondary 42C15, 33C90, 94A12.}
\keywords{Gabor frame, frame bounds, sampling inequality, Balian-Low
  theorem, Fock space, atomization technique}

\maketitle
\section{Introduction}

In this article we study the stability problem for  the expansions of functions 
on the real line with respect to a discrete set of phase-space shifts of a Gaussian,
precisely
\begin{equation}
\label{eq:1} 
f(x) = \sum _{k,l\in \bZ } c_{kl} e^{2\pi i lax} e^{-\pi (x-bk)^2} \, .
\end{equation}
Expansions of such form (with $a=1$, $b=1$) were introduced by D.~Gabor in his
classical article  \cite{Gabor}. Now   expansions of type~\eqref{eq:1}, so-called Gabor
expansions, appear in signal processing, quantum mechanics,
time-frequency analysis, the theory of pseudodifferential operators, and other
applications. 

During the last decades an extensive theory of expansions \eqref{eq:1} as well as more general Gabor expansions has been developed (see, for instance \cite{Charlybook, Christensen} and the references therein). However, not much  is known about numerical  stability property of such expansions.

In modern language, the existence of Gabor expansions is derived from
frame theory. To fix  terminology and notation, take some   $g\in \lr$, it will be called  a
 window function,  and  let  $\Lambda =  M \bZ ^2  \subset \bR ^2$ be a  lattice
in $\bR ^2$, where $M $ is a $2\times 2$ invertible real-valued
matrix.
Given a point $\lambda=(x,\xi )$ in phase-space $\bR ^2$, the 
corresponding \tfs\ is 
$$
\pi_\lambda f(t) = e^{2\pi i \xi t } f(t-x),        \qquad  t\in \bR
\, .
$$
The set of functions 
   $\cG (g,\Lambda ) = \{\pi_\lambda g :  \lambda  \in \Lambda\}$ is
called the \emph{Gabor system} generated by $g$  and $\Lambda$.
We say that such a system is a {\em Gabor frame}
 or \emph{Weyl-Heisenberg frame}, whenever
there  exist  constants $A,B >0$ such that, 
for all $f\in \lr$,
\begin{equation}
\label{Eq:2.8} 
A\|f\|^2_{\lr} \leq \sum_{\lambda \in
\Lambda}|\langle f, \pi_\lambda g\rangle_{\lr}|^2 \leq
B\|f\|^2_{\lr}\, .
\end{equation}
The (best possible)  constants $A=A(\Lambda,g)$  and 
$B=B(\Lambda,g)$ in~\eqref{Eq:2.8}  are called the 
{\em lower and upper frame bounds}  for the  frame  $\cG (g,\Lambda )$.

There is a standard procedure for constructing expansions of type \eqref{eq:1} 
for each Gabor frame. Namely there exists a 
\emph{dual window} $\gamma \in L^2(\bR )$, such that every  $f\in
L^2(\bR )$ can be
expanded as a Gabor series 
\begin{equation}
\label{eq:2}
f= \sum _{\lambda \in \Lambda } \langle f, \pi_\lambda \gamma
\rangle  \pi_\lambda  g.
\end{equation}
Such dual window is, in general, non-unique. 
We refer the reader to e.g. ~\cite{Charlybook} for an exposition of Gabor analysis and related matters.

The property of  the system $\cG (g,\Lambda )$ to form a frame in $\lr$ depends (among other factors) on geometrical characteristics of $\Lambda$. We say that the area of its fundamental 
domain  $s(\Lambda ) = \mbox{Area}(M[0,1)^2)= |\det M|$  is the \emph{size}  of $\Lambda$. 
By the {\em density} of $\Lambda$  we mean $d(\Lambda)=s(\Lambda)^{-1}$; for the lattice case this definition of density coincides with numerous   
standard density definitions (see e.g. \cite{seip}).

We refer to~\cite{heil} for a comprehensive account of the density
theorems for Gabor frames. In fact  for \emph{any} window function $g$ 
 the condition $s(\Lambda )\leq 1$ is necessary 
for $\cG (g, \Lambda )$ to be a frame in $\lr$. 
   For ``nice'' windows $g$ (in the Schwartz class, say) a
fascinating form of the uncertainty principle, the so-called
Balian-Low Theorem (BLT), requires even that  $s(\Lambda )<1$ for $\cG
(g, \Lambda )$ to be a frame~\cite{BHW95}.  

The results of \cite{sw,lyu} yield in particular that in the case of 
 the
Gaussian window 
$$
\vf (t) = e^{-\pi t^2},\qquad  t\in \bR .
$$
the condition  $s(\Lambda ) <1$ is also sufficient:
\medskip

\noindent{\bf Theorem A.} {\em  
  The set $\cG (\vf , \Lambda )$ is a frame for $\lr $ \fif\
  $s(\Lambda )<1$ .}
 
\medskip

Together with BLT this implies that  the lower frame bound
$A=A(\Lambda )$ must tend to $0$, as the size of the lattice
$s(\Lambda ) $ approaches  one. Thus the original Gabor series  
 \eqref{eq:1} with $a=1$, $b=1$   corresponds to the critical case $s(\Lambda )=1$
 and  does not provide an $L^2$-stable  expansion. 
 \footnote{In his article \cite{Gabor} Gabor considered  expansions of functions $f$
 which possess additional decay in time and frequency. As we now know  (see e.g. \cite{LS}),  for such functions the series \eqref{eq:1} converges in $\lr$. }
 In this case, there exist  $L^2$-functions with  polynomially growing
coefficients. See~\cite{janssen, LS} for the convergence properties
of~\eqref{eq:1}.

In this article we are concerned exclusively with Gabor frames for the
Gaussian window $\cG (\vf , \Lambda )$  for the square lattice 
$\Lambda(a)=a\ZZ\times a\ZZ$ and study the behavior of its frame constants 
$A(a)=A(\Lambda(a))$  and $B(a)=B(\Lambda(a))$ near the critical density $d(\Lambda)=1$. The main result of the article reads as follows.

\begin{th1}
\label{th:1}
There exist constants $0<c<C<\infty$ such that for each $a\in (1/2,1)$ the frame bounds $A(a)$,
$B(a)$ for the frame $\cG (\vf , \Lambda(a) )$ satisfy
\begin{equation}
\label{eq:3}
c (1-a ^2)\leq A(a ) \leq C (1-a ^2)
\end{equation}
and
\begin{equation}
\label{xlst}
c< B(a) < C.    
\end{equation}
\end{th1}

\begin{rem}
A similar statement holds for arbitrary rectangular lattices.  
  The values of  $c, C$ in this theorem then  depend upon the shape of
  the lattice. Nevertheless,  one can prove
  that given a number $K>0$ there exist 
  constants  $c$ and $C$ valid 
  for all matrices $M$ such that  the diameter  
of the fundamental domain $M (0,1]^2$  does not exceed $K$.
\end{rem}

The ratio $B(\Lambda)/A(\Lambda)$ plays the role of the condition number for the frame 
$\cG (\vf , \Lambda)$. Thus Theorem \ref{th:1}  says how fast does the frame 
$\cG (\vf , \Lambda)$ "numerically degenerate"  as its density approaches the critical value.

The asymptotical behavior $A(a ) \asymp  (1-a ^2) $ has been first observed
numerically by Thomas Strohmer~\cite{str09}  and by Peter Sondergaard~\cite{So}.
Moreover, the numerical simulation in \cite{str09} allowed us to guess the construction which 
gives 
the second i
nequality in \eqref{eq:3}. This construction is described 
in Section 4 below.

Next let $g_1(t) = (\mathrm{cosh}\, \pi \gamma t)\inv$, $\gamma >0$,
be the hyperbolic cosine function. Janssen and Strohmer~\cite{JS02}
have shown that $\cG (g_1, a\bZ \times b\bZ)$ is a frame, if and only
if $ab<1$. To do this, they showed that the frame bounds for $\cG (g_1, a\bZ \times b\bZ)$
are equivalent to those of $\cG (g_0, a\bZ \times b\bZ)$ with the
Gaussian $g_0$ and applied Theorem~A.  Therefore we obtain the same
asymptotic estimates for the frame bounds for the hyperbolic cosine.

\begin{cor}
There exist constants $0<c<C<\infty$ such that for each $a\in (1/2,1)$
the frame bounds $\tilde{A}(a)$, 
$\tilde B(a)$ for the frame $\cG (g_1 , \Lambda(a) )$ satisfy
$$
c (1-a ^2)\le \tilde A(a ) \le C (1-a ^2)
$$
and
$$
c\le \tilde B(a) \le C.    
$$
  
\end{cor}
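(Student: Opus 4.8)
The plan is to obtain the corollary as a transfer of Theorem~\ref{th:1} across the comparison of frame operators due to Janssen and Strohmer~\cite{JS02}. Recall that for a window $g$ and a lattice $\Lambda$ the Gabor frame operator
\[
S_{g,\Lambda}f=\sum_{\lambda\in\Lambda}\langle f,\pi_\lambda g\rangle\,\pi_\lambda g
\]
is positive and self-adjoint on $\lr$, and that the optimal frame bounds of $\cG(g,\Lambda)$ coincide with the infimum and the supremum of its spectrum, the lower bound being $0$ when $\cG(g,\Lambda)$ is not a frame. The input I would extract from~\cite{JS02} is the \emph{quantitative} form of their comparison: there exist constants $0<m\le M<\infty$, depending only on $\gamma$, such that
\[
m\,\langle S_{\vf,\Lambda(a)}f,f\rangle\;\le\;\langle S_{g_1,\Lambda(a)}f,f\rangle\;\le\;M\,\langle S_{\vf,\Lambda(a)}f,f\rangle,\qquad f\in\lr,
\]
uniformly for $a\in(1/2,1)$.

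Granting this inequality, the remainder is bookkeeping. Dividing by $\|f\|^2$ and taking the infimum over $f\neq0$ in the two outer terms gives $m\,A(a)\le\tilde A(a)\le M\,A(a)$, and taking the supremum gives $m\,B(a)\le\tilde B(a)\le M\,B(a)$, where $A(a)$ and $B(a)$ denote the frame bounds of the Gaussian frame $\cG(\vf,\Lambda(a))$; for the upper estimate on $\tilde A(a)$ and the lower estimate on $\tilde B(a)$ one simply tests against a near-extremizing sequence for $S_{\vf,\Lambda(a)}$. Inserting the bounds $c\,(1-a^2)\le A(a)\le C\,(1-a^2)$ and $c\le B(a)\le C$ of Theorem~\ref{th:1} then yields $mc\,(1-a^2)\le\tilde A(a)\le MC\,(1-a^2)$ and $mc\le\tilde B(a)\le MC$, which is the assertion after renaming constants.

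The step that genuinely needs attention — the main obstacle, modest as it is — is establishing the displayed quantitative and uniform comparison, because \cite{JS02} only required the qualitative equivalence of the frame properties of $\cG(g_1,\Lambda)$ and $\cG(\vf,\Lambda)$ in order to apply Theorem~A. Their argument proceeds via the Zak transform and hinges on the ratio $Zg_1/Z\vf$ being bounded above and bounded away from $0$; tracking the implied constants, one finds that the resulting $m$ and $M$ depend continuously on $\gamma$ and on the lattice parameters, hence remain uniform while $\gamma$ is fixed and $a$ ranges over the compact interval $[1/2,1]$. Alternatively, one may exploit the fact that a simultaneous metaplectic dilation of window and lattice leaves all frame bounds unchanged to first normalize $\gamma=1$, at the price of replacing $\Lambda(a)$ by a rectangular lattice of the same area $a^2$ and fixed aspect ratio, and then invoke the Remark after Theorem~\ref{th:1} for the uniform constants. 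Beyond this there is no hard point: the corollary is Theorem~\ref{th:1} viewed through the Janssen--Strohmer equivalence.
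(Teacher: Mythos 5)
Your proposal is correct and follows the same route as the paper: the paper's entire justification of the corollary is the preceding paragraph invoking the Janssen--Strohmer comparison of the frame operators of $g_1$ and $\vf$ and then transferring the bounds of Theorem~\ref{th:1}. Your additional care about the quantitative, lattice-uniform form of that comparison (and the trivial monotonicity of $\inf$ and $\sup$ of the Rayleigh quotients under the operator inequality, which makes the ``near-extremizing sequence'' remark unnecessary) only makes explicit what the paper leaves implicit.
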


The proof of Theorem~\ref{th:1} involves both time-frequency
methods and methods of complex analysis. 
We use complex analysis in order to obtain the upper estimate for $A(a)$
and the Gabor analysis  in order to obtain  the rest of the statements in 
Theorem 1.1 (though a pure complex-analytic proof is also available).
 In particular we apply
  Walnut's estimates for the norm of the frame
operator~\cite{walnut92}, and also precise decay estimates for the
dual window established in \cite{superframes}.  The upper bound $A(a) \le C(1-a^2)$
will be established by the construction of a concrete example. 
We produce a function $f_a$ (depending on
the lattice $\Lambda(a)$), such that 
$$
\sum_{\lambda \in \Lambda(a) } |\langle f_a, \pi_\lambda \vf \rangle |^2
\le C(1-a^2) \|f_a\|_{\lr}^2 \, .
$$
By using the Bargmann transform, we   translate our
problem  into one of finding entire functions in
the Bargmann-Fock space whose restrictions to $\Lambda(a)$ 
are "small" with respect to 
their Fock norms.

The paper is organized as follows. In the next section we discuss the estimates for $B(a)$. 
Furthermore, we give the lower estimates for $A(a)$. Here we
mainly follow the arguments from \cite{superframes}.  
In section 3 we recall the definition of the Fock space $\cF$ of entire functions, and discuss the 
relations between the frame property of the system $\cG (\vf,\Lambda(a))$ and sampling in $\cF$. We also recall  basic properties of the Weierstrass $\sigma$-function. In section 4 we use these facts and also special "atomization"  techniques in order to construct the example which delivers the upper estimate in \eqref{eq:3}.

 \textbf{Notation:} To avoid dealing with too many intermediate
 constants, we use the standard notation $f \prec g$ to express
 an  inequality $f(x) \leq C g(x)$ for all $x$ with a constant $C$ 
 independent of $x$ (and possibly other parameters). Likewise, $f
 \asymp g$ means that there exist $A,B>0$ such that $A f(x) \leq g(x)
 \leq Bf(x)$ for all $x$.


\section{Time-Frequency Methods To Estimate Frame Bounds}

The estimate \eqref{xlst} on the upper frame bound $B(a)$ can be obtained in various ways.
In particular, we can use Walnut's estimates, which give a sufficient condition for
the Gabor frame operator to be bounded~\cite{walnut92}. This result also follows from the Polya-Plancherel type
inequalities for functions in the Bargmann-Fock space, see below Section 3 for more details.

To obtain the lower estimates for $A(a)$ we need to show the invertibility of the Gabor frame operator
and to estimate the norm of the inverse operator. We will approach this problem by using information
about a suitable dual window $\gamma $ and then apply Walnut's estimates to the Gabor expansion~\eqref{eq:2}.

To state Walnut's result we need the following definitions. 

Let $W$ be the the Wiener amalgam space of functions on the real line defined by the norm 
$$
\|g\|_W = \sum _{k\in \bZ } \sup _{t\in [0,1]} |g(t+k)| \, .
$$

Given a function $g$ in $L^2(\mathbb R)$, consider the Gabor system $\cG
(g,\Lambda )$ and the corresponding synthesis operator $D_{g,\Lambda }$,
$$
D_{g,\Lambda } \mathbf{c} = \sum _{\lambda \in \Lambda } c_\lambda \pi_\lambda  g,
$$ 
and the analysis operator $C_{g,\Lambda }$,
$$
(C_{g,\Lambda } f)(\lambda ) = \langle f, \pi_\lambda g\rangle,
\qquad \lambda \in \Lambda.
$$ 

If $D_{g,\Lambda }$ acts continuously from $\ell^2(\Lambda)$ to $L^2(\mathbb R)$, then
$C_{g,\Lambda }$ acts continuously from $L^2(\mathbb R)$ to $\ell^2(\Lambda)$,
and $C_{g,\Lambda } = D_{g,\Lambda }^*$.

The following lemma  from \cite{walnut92} gives an estimate for 
$\|D_{g,\Lambda}\|_{l^2\to L^2}$:

\begin{lem}\label{walnut}
  If $g \in W$ and $\Lambda = a\bZ^2$, then $D_{g,\Lambda
  }$  is bounded from $\ell ^2(\Lambda )$ to $L^2(\bR)$ and 
$$
    \|D_{g,\Lambda }\|_{L^2(\mathbb R)} \leq (1+a\inv ) \|g\|_W \, .
$$
\end{lem}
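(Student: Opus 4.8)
The plan is to estimate $\|D_{g,\Lambda}\mathbf{c}\|_{\lr}$ directly, exploiting the two incompatible scales hidden in $\Lambda=a\bZ^2$: the translations run over $a\bZ$, whereas the modulations $e^{2\pi i al t}$ force the natural auxiliary functions to be periodic with period $1/a$. It is enough to prove the inequality for finitely supported $\mathbf{c}=(c_{kl})_{k,l\in\bZ}$ (then $D_{g,\Lambda}\mathbf{c}$ is a finite combination of elements of $\lr$, the bound extends by density to all of $\ell^2(\Lambda)$, and $D_{g,\Lambda}$ is the resulting continuous extension). First I would collect the inner sum over $l$:
\[
D_{g,\Lambda}\mathbf{c}(t)=\sum_{k\in\bZ} g(t-ak)\,h_k(t),\qquad h_k(t):=\sum_{l\in\bZ} c_{kl}\,e^{2\pi i al t},
\]
noting that each $h_k$ has period $1/a$ and, by orthogonality of $\{e^{2\pi i al t}\}_{l\in\bZ}$ on $[0,1/a)$, that $\int_0^{1/a}|h_k(t)|^2\,dt=a\inv\sum_{l\in\bZ}|c_{kl}|^2$.

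Next I would apply Cauchy--Schwarz in the summation variable $k$, splitting the weight $|g(t-ak)|$ into a product of two square roots:
\[
\Bigl|\sum_k g(t-ak)h_k(t)\Bigr|^2\le\Bigl(\sum_k|g(t-ak)|\Bigr)\Bigl(\sum_k|g(t-ak)|\,|h_k(t)|^2\Bigr).
\]
The first factor is bounded uniformly in $t$: the points $t-ak$ form an arithmetic progression of step $a$, so each interval $[n,n+1)$ contains at most $\lceil 1/a\rceil\le 1+a\inv$ of them, whence $\sup_t\sum_k|g(t-ak)|\le(1+a\inv)\|g\|_W$. Substituting this and using Tonelli,
\[
\|D_{g,\Lambda}\mathbf{c}\|_{\lr}^2\le(1+a\inv)\|g\|_W\sum_k\int_{\bR}|g(t-ak)|\,|h_k(t)|^2\,dt.
\]
For each $k$ I would then periodize over one period of $h_k$: since $h_k$ is $1/a$-periodic and $1/a\ge 1$ in the range of interest $a\le 1$,
\[
\int_{\bR}|g(t-ak)|\,|h_k(t)|^2\,dt=\int_0^{1/a}|h_k(t)|^2\Bigl(\sum_{m\in\bZ}|g(t-ak+m/a)|\Bigr)dt\le\|g\|_W\int_0^{1/a}|h_k(t)|^2\,dt,
\]
the last step because the progression $t-ak+(1/a)\bZ$ has step $1/a\ge 1$ and so meets each unit interval at most once. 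Combining this with $\int_0^{1/a}|h_k|^2=a\inv\sum_l|c_{kl}|^2$ and summing over $k$ gives
\[
\|D_{g,\Lambda}\mathbf{c}\|_{\lr}^2\le(1+a\inv)\,a\inv\,\|g\|_W^2\sum_{k,l}|c_{kl}|^2\le(1+a\inv)^2\|g\|_W^2\,\|\mathbf{c}\|_{\ell^2(\Lambda)}^2,
\]
using $a\inv\le 1+a\inv$; taking square roots yields the lemma.

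I do not expect a genuine obstacle here — the argument is elementary. The only point that needs care is the bookkeeping of constants across the two scales: the translation sum over $a\bZ$ (step $a$, contributing the factor $\lceil 1/a\rceil\le 1+a\inv$), the periodization sum over $(1/a)\bZ$ (step $1/a\ge 1$, contributing a factor $1$), and the normalization $\int_0^{1/a}|h_k|^2=a\inv\sum_l|c_{kl}|^2$, arranged so that the product of the three collapses exactly to $(1+a\inv)^2$. One should also record at the outset that $W\hookrightarrow\lr$ (indeed $\|g\|_{\lr}\le\|g\|_W$), so every $\pi_\lambda g$ and every finite sum above lies in $\lr$ and no convergence issue arises before the final density step.
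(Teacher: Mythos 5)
Your argument is correct, and it is essentially the standard proof of Walnut's estimate (the paper itself gives no proof of Lemma 2.1 --- it simply cites \cite{walnut92} --- so there is nothing internal to compare against, but your route of collecting the modulation sum into $1/a$-periodic functions $h_k$, applying Cauchy--Schwarz with the split weight $|g(t-ak)|^{1/2}\cdot|g(t-ak)|^{1/2}$, and periodizing is exactly the classical one). All the bookkeeping checks out: $\sup_t\sum_k|g(t-ak)|\le(1+a^{-1})\|g\|_W$, the periodization over the step-$1/a$ progression costs a factor $\|g\|_W$ when $a\le 1$, and $\int_0^{1/a}|h_k|^2=a^{-1}\sum_l|c_{kl}|^2$, giving $(1+a^{-1})a^{-1}\|g\|_W^2\le(1+a^{-1})^2\|g\|_W^2$. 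The only cosmetic remark is that you restrict to $a\le 1$ (which covers everything the paper needs, namely $1/2<a<1$); for $a>1$ the same computation still closes, since the periodization factor becomes $(1+a)\|g\|_W$ and $(1+a^{-1})(1+a)a^{-1}=(1+a^{-1})^2$ exactly, so the lemma holds as stated for all $a>0$.
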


Since, obviously, in our situation $B(a)=\|C_{g,\Lambda}\|^2_{L^2\to l^2}=
\|D_{g,\Lambda}\|^2_{l^2\to L^2}$, we obtain

\begin{cor}
  If $g\in W$ and $a>0$, then 
\begin{equation}
\label{eq:c10}
B(a) \le (1+a\inv )^2 \|g\|_W^2 \, .
\end{equation}
\end{cor}

To treat Gabor frames with Gaussian window, we need to evaluate the amalgam space
norm of functions with Gaussian decay. 

\begin{lem}\label{gaussdecay}
Assume that $\kappa>0$, $|\gamma (t)| \leq  e^{-\pi \kappa t^2}$. Then 
\begin{equation}
\label{eq:c11}
\|\gamma \|_W \le  2+\kappa ^{-1/2}.
\end{equation}
\end{lem}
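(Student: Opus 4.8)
The plan is to estimate the amalgam norm term by term and then compare the resulting series with a Gaussian integral. For a fixed $k\in\bZ$ the hypothesis gives
\begin{equation*}
\sup_{t\in[0,1]}|\gamma(t+k)| \le \sup_{t\in[0,1]} e^{-\pi\kappa(t+k)^2} = e^{-\pi\kappa\,\delta_k^2},
\end{equation*}
where $\delta_k=\dist\bigl(0,[k,k+1]\bigr)$ is the distance from the origin to the translated unit interval, since $u\mapsto e^{-\pi\kappa u^2}$ is even and decreasing in $|u|$.

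First I would identify the $\delta_k$. The intervals $[0,1]$ (for $k=0$) and $[-1,0]$ (for $k=-1$) contain the origin, so $\delta_0=\delta_{-1}=0$; for $k\ge 1$ one has $\delta_k=k$, and for $k\le -2$ one has $\delta_k=|k|-1$. Substituting and re-indexing the two tails (each becomes $\sum_{m\ge1}e^{-\pi\kappa m^2}$ after a shift) yields
\begin{equation*}
\|\gamma\|_W \le 2 + 2\sum_{m\ge 1} e^{-\pi\kappa m^2}.
\end{equation*}

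Next I would bound the tail series by an integral. Since $x\mapsto e^{-\pi\kappa x^2}$ decreases on $[0,\infty)$, we have $e^{-\pi\kappa m^2}\le \int_{m-1}^{m} e^{-\pi\kappa x^2}\,dx$ for each $m\ge1$, hence
\begin{equation*}
\sum_{m\ge1} e^{-\pi\kappa m^2} \le \int_0^\infty e^{-\pi\kappa x^2}\,dx = \frac{1}{2\sqrt{\kappa}}.
\end{equation*}
Combining the last two displays gives $\|\gamma\|_W \le 2 + \kappa^{-1/2}$, which is \eqref{eq:c11}.

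There is no genuine obstacle here; the only points requiring care are the bookkeeping that exactly two translated unit intervals (those indexed by $k=0$ and $k=-1$) meet the origin — so that the corresponding suprema are bounded by $1$ rather than by a decaying exponential — and the elementary monotone comparison of the series with the Gaussian integral, which is what produces the clean constant $\kappa^{-1/2}$.
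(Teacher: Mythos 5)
Your argument is correct and is essentially the paper's own proof: both bound the two suprema for the intervals containing the origin by $1$, bound the remaining terms by $e^{-\pi\kappa k^2}$ (resp.\ $e^{-\pi\kappa(|k|-1)^2}$), and compare the resulting series with $\int_{\bR} e^{-\pi\kappa t^2}\,dt=\kappa^{-1/2}$. The only difference is cosmetic — you symmetrize the two tails into a single sum before comparing with the integral, while the paper bounds each term directly by an integral over a unit interval.
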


\begin{proof}
For $n\ge 1$, $n\in \bZ$, we have 
$$
\sup_{t\in [0,1]} |\gamma (n+t)| \le e^{-\pi \kappa n^2} \le \int_{n-1}^n e^{-\pi \kappa t^2}\, dt \, ,
$$
and likewise for $n<-1$, $n\in \bZ $, we have  
$$
\sup _{t\in [0,1]} |\gamma (n+t)| \leq e^{-\pi \kappa (|n|-1)^2} \leq \int
_{|n|-2}^{|n|-1} e^{-\pi \kappa t^2}\, dt \, .
$$ 
Consequently, 
$$
\|\gamma \|_W = \sum _{n\in \bZ } \sup _{t\in [0,1]} |\gamma(t+n)|
\leq 2+ \int _\bR e^{-\pi \kappa t^2}\, dt = 2+\kappa ^{-1/2} \, .
$$
\end{proof}

As a consequence we obtain an estimate on the upper frame bound of
Gaussian Gabor frames. 

\begin{prop} \label{upper}
The upper frame bound $B(a)$ of $\cG (\vf , a\bZ^2 )$, $1/2<a<1$, satisfies the estimate
$$
1<B(a)< 100.
$$
\end{prop}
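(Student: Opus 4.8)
The plan is to get both bounds in Proposition~\ref{upper} essentially for free from the machinery already assembled. For the upper bound $B(a) < 100$, I would apply Corollary~\ref{eq:c10} with $g = \vf$ and Lemma~\ref{gaussdecay} with $\kappa = 1$, since $\vf(t) = e^{-\pi t^2}$ so the hypothesis $|\vf(t)| \le e^{-\pi \kappa t^2}$ holds with $\kappa = 1$. Lemma~\ref{gaussdecay} then gives $\|\vf\|_W \le 2 + 1 = 3$, hence $\|\vf\|_W^2 \le 9$. Combining with~\eqref{eq:c10} yields $B(a) \le (1 + a^{-1})^2 \cdot 9$. For $a \in (1/2,1)$ we have $a^{-1} < 2$, so $(1 + a^{-1})^2 < 9$, and therefore $B(a) < 81 < 100$. (If the constant $81$ is uncomfortably close, one can note that for $a>1/2$ strictly one actually gets a bound safely below $81$; the point is only that a universal constant works, and $100$ is a convenient round number.)

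For the lower bound $B(a) > 1$, the cleanest argument is to observe that $B(a) \ge A(a) > 0$ is not enough since $A(a)$ degenerates, so instead I would use the elementary lower estimate for the upper frame bound coming from a single term. Indeed, $B(a) \|f\|^2 \ge \sum_{\lambda \in \Lambda(a)} |\langle f, \pi_\lambda \vf\rangle|^2 \ge |\langle f, \pi_0 \vf\rangle|^2 = |\langle f, \vf\rangle|^2$ for every $f$. Choosing $f = \vf$ and recalling $\|\vf\|_{\lr}^2 = \int_\bR e^{-2\pi t^2}\,dt = 2^{-1/2}$, we get $B(a) \cdot 2^{-1/2} \ge (2^{-1/2})^2 = 2^{-1}$, i.e.\ $B(a) \ge 2^{-1/2} > 1/\sqrt 2$. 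To push past $1$, I would instead retain two lattice points: the $\lambda = 0$ term together with, say, the four nearest neighbors $(\pm a, 0)$, $(0, \pm a)$, test against $f = \vf$, and compute the inner products $\langle \vf, \pi_\lambda \vf\rangle$ explicitly (these are Gaussian integrals, equal to $e^{-\pi a^2/2}$ up to a dimensional constant). Since $a < 1$, these cross terms are bounded below by a fixed positive number, and summing $|\langle \vf, \vf\rangle|^2$ plus the four neighbor contributions, then dividing by $\|\vf\|^2$, gives a quantity strictly exceeding $1$ uniformly for $a \in (1/2,1)$.

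The only mild obstacle is the bookkeeping in the second part: one must make sure the explicit constant obtained from $\|\vf\|^2$ and the finitely many Gaussian inner products genuinely exceeds $1$ rather than merely being positive, and that the estimate is uniform down to $a = 1/2$. This is a routine but slightly delicate numerical check — one evaluates $\|\vf\|_{\lr}^2 = 1/\sqrt 2 \approx 0.707$ and the neighbor inner products at the worst case $a = 1$ (where they are smallest), finds $e^{-\pi/2} \approx 0.208$, and verifies that $\bigl(0.707^2 + 4 \cdot (0.707 \cdot 0.208)^2\bigr)/0.707$ exceeds $1$. I expect this to work comfortably; if the margin is too thin one simply includes more lattice points. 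In summary, both inequalities follow by specializing the already-proved Walnut-type bound (Corollary after Lemma~\ref{walnut}, together with Lemma~\ref{gaussdecay}) for the upper estimate, and by a direct finite-sum lower bound tested on the Gaussian itself for the lower estimate; no new ideas are required beyond what the preceding section supplies.
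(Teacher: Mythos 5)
Your upper bound argument is correct and is exactly the paper's: combine \eqref{eq:c10} with Lemma~\ref{gaussdecay} for $\kappa=1$ to get $B(a)\le(1+a^{-1})^2\|\vf\|_W^2<9\cdot 9=81<100$ for $a>1/2$.

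The lower bound is where there is a genuine gap, and it is precisely the point you flagged as ``routine but slightly delicate'' and then deferred. Your plan (which is also the paper's) is to test \eqref{Eq:2.8} on $f=\vf$. Since $\langle\vf,\pi_{(x,\xi)}\vf\rangle=2^{-1/2}e^{-\pi i x\xi}e^{-\pi(x^2+\xi^2)/2}$, one has $|\langle\vf,\pi_\lambda\vf\rangle|^2=\tfrac12 e^{-\pi|\lambda|^2}$ and hence
\begin{equation*}
\sum_{\lambda\in\Lambda(a)}|\langle\vf,\pi_\lambda\vf\rangle|^2=\frac12\Bigl(\sum_{n\in\bZ}e^{-\pi a^2 n^2}\Bigr)^2 .
\end{equation*}
At $a=1$ the theta sum is $\approx 1.0864$, so the full lattice sum is $\approx 0.590$, while $\|\vf\|^2=2^{-1/2}\approx 0.707$; the quotient is $\approx 0.835<1$. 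Your five-term version gives $\approx 0.829$. So the numerical check fails for $a$ close to $1$ (the inequality $\sum_{\lambda}|\langle\vf,\pi_\lambda\vf\rangle|^2>\|\vf\|^2$ holds only for $a$ below roughly $0.87$), and your fallback of ``including more lattice points'' cannot rescue it, because even the sum over all of $\Lambda(a)$ falls short: testing on the Gaussian itself can never yield the constant $1$ near the critical density. (To be fair, the paper's own one-line proof asserts the same unverified inequality and shares this defect.) Note that for Theorem~\ref{th:1} only a uniform positive lower bound is needed, and the single term $\lambda=0$ already gives $B(a)\ge|\langle\vf,\vf\rangle|^2/\|\vf\|^2=\|\vf\|^2=2^{-1/2}$, which suffices for \eqref{xlst}; to actually prove $B(a)>1$ one must test on a different function (for instance one concentrated where the Zak transform of $\vf$ is large) or use a Janssen-type representation of the frame operator.
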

\begin{proof}
  
For the upper estimate we use \eqref{eq:c10} and \eqref{eq:c11} with $\kappa =1$. 

To get the lower estimate we consider the sum  \eqref{Eq:2.8} for $f= g = \vf $. Then
$$
\sum _{\lambda \in \Lambda (a)} |\langle \vf, \pi_\lambda  \vf \rangle|^2 > \|\vf\|^2,
$$
which yields the desired estimate. 
\end{proof}

The time-frequency methods also yield the lower estimate in \eqref{eq:3}. This estimate requires 
the existence and some knowledge about a dual window. If $\cG (g, \Lambda )$ is a frame, then 
by the frame theory there exists a dual window $\gamma \in L^2(\bR )$, such that every $f\in L^2(\bR )$ 
possesses a(n unconditionally convergent) series expansion (Gabor expansion) of the form
$$
f= \sum _{\lambda \in \Lambda } \langle f, \pi_\lambda g\rangle \pi_
\lambda \gamma = D_{\gamma , \Lambda } C_{g,\Lambda }f \, .
$$
For the square lattice $\Lambda(a)$, 
Lemma~\ref{walnut} yields the following bound:
$$
  \|f\|_{\lr}^2 \leq (a\inv +1)^2 \|\gamma \|_W^2 \, \sum
  _{\lambda \in \Lambda } |\langle f, \pi_\lambda g\rangle |^2 \, .
$$
Consequently, the lower frame bound $A(a)$ can be estimated from below as
\begin{equation}
\label{eq:c17}
A(a) \ge \bigl((a\inv +1)^2  \|\gamma \|_W^2\bigl)\inv \, .  
\end{equation}
 
\begin{prop}\label{lower}
For the square lattice $\Lambda(a)$, $1/2<a<1$,  the lower
bound $A(a)$ of the Gaussian frame $\cG (\vf , \Lambda (a) )$  obeys
the   estimate 
$$
  A(a) \succ 1-a^2.
$$
\end{prop}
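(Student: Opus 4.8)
The plan is to reduce everything to inequality \eqref{eq:c17}. That estimate says that once we exhibit any dual window $\gamma=\gamma_a$ of $\cG(\vf,\Lambda(a))$ with amalgam norm $\|\gamma_a\|_W \prec (1-a^2)^{-1/2}$, we are done: for $1/2<a<1$ one has $(a\inv+1)^2<9$, so \eqref{eq:c17} gives
$$A(a)\ge\bigl((a\inv+1)^2\,\|\gamma_a\|_W^2\bigr)\inv\succ\|\gamma_a\|_W^{-2}\succ 1-a^2 .$$
Thus the whole proof is the construction and estimation of a good dual window.

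I would take $\gamma_a$ to be the canonical dual window $\gamma_a=S_a\inv\vf$, where $S_a=S_{\vf,\Lambda(a)}$ is the Gabor frame operator; this is the window appearing in the expansion $f=D_{\gamma_a,\Lambda(a)}C_{\vf,\Lambda(a)}f$ used to derive \eqref{eq:c17}. The key input — and the step I expect to be the real obstacle — is a sharp pointwise bound for $\gamma_a$ near the critical density, namely
$$|\gamma_a(t)|\le\frac{C}{\sqrt{1-a^2}}\,e^{-\pi\kappa t^2},\qquad t\in\bR,$$
with constants $C>0$ and $\kappa>0$ both independent of $a\in(1/2,1)$. The nontrivial feature here is not the Gaussian-type decay (which is classical), but the fact that, as $a\uparrow1$, only the prefactor $(1-a^2)^{-1/2}$ degenerates while the width parameter $\kappa$ stays bounded away from $0$. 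This is exactly the kind of precise control of the canonical dual window obtained in \cite{superframes} via the Bargmann transform and an analysis of the frame operator on the Fock space, and I would simply quote it rather than reprove it.

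Granting this estimate, the remainder is routine. Applying Lemma \ref{gaussdecay} to the function $\sqrt{1-a^2}\,\gamma_a/C$, which is dominated by $e^{-\pi\kappa t^2}$, yields $\|\gamma_a\|_W\le C(2+\kappa^{-1/2})(1-a^2)^{-1/2}$; in particular $\gamma_a\in W$, so Lemma \ref{walnut} applies and the passage to \eqref{eq:c17} is legitimate. Since $\kappa$ is a fixed constant, this is precisely $\|\gamma_a\|_W\prec(1-a^2)^{-1/2}$, and the displayed chain of inequalities above gives $A(a)\succ1-a^2$. One should note finally that for $a$ in a compact subset of $(1/2,1)$ the statement is trivial, since both $1-a^2$ and $A(a)$ are then bounded below; the only regime that matters is $a\uparrow1$, which is exactly where the estimate from \cite{superframes} is designed to operate.
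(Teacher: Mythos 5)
Your overall strategy is exactly the paper's: reduce the lower frame bound to the Wiener amalgam norm of a dual window via \eqref{eq:c17}, and control that norm by Lemma \ref{gaussdecay} using a Gaussian-type pointwise bound imported from \cite{superframes}. The final chain of inequalities is also the same. The problem is the form of the key input you quote. You assert that the \emph{canonical} dual window $\gamma_a=S_a^{-1}\vf$ satisfies $|\gamma_a(t)|\le C(1-a^2)^{-1/2}e^{-\pi\kappa t^2}$ with $\kappa>0$ \emph{independent} of $a$, i.e.\ that only the amplitude degenerates while the localization stays uniform. That is not what \cite{superframes} provides, and it is not something you can safely take for granted: the estimate actually available there is for an explicitly constructed (in general non-canonical) dual window $\gamma$ satisfying $|\gamma(t)|\le Ce^{-\pi\kappa t^2}$ with $C$ uniform in $a$ but with $\kappa\asymp 1-a^2$ --- it is the decay \emph{rate}, not the amplitude, that collapses as $a\uparrow 1$. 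This is also what the Balian--Low heuristic suggests: as the density approaches critical, any dual window must delocalize, so a uniform-in-$a$ Gaussian width for the canonical dual is a strong and, as far as I know, unestablished claim. Since you explicitly single out this estimate as the one nontrivial input and propose to ``simply quote it'', the proof as written rests on a citation that does not exist in the stated form.

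Fortunately the defect is local and easily repaired. With the correct statement $|\gamma(t)|\le Ce^{-\pi\kappa t^2}$, $\kappa\asymp 1-a^2$, Lemma \ref{gaussdecay} gives $\|\gamma\|_W\prec 2+\kappa^{-1/2}\prec(1-a^2)^{-1/2}$, which is exactly the amalgam bound your argument needs; the remaining steps (the normalization needed to invoke Lemma \ref{gaussdecay}, the uniform bound $(a^{-1}+1)^2<9$ for $a\in(1/2,1)$, and the passage through \eqref{eq:c17}) then go through verbatim. Note also that \eqref{eq:c17} holds for \emph{any} dual window, so nothing in the argument forces you to work with the canonical one.
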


\begin{proof}
In \cite{superframes} the authors consider 
the Gaussian Gabor frame $\cG (\vf , \Lambda (a) )$. For this frame they 
construct a dual window $\gamma$ such that
$$
|\gamma (t) | \leq C e^{-\pi \kappa t^2}
$$
with $\kappa \asymp 1-a^2$.

By Lemma~\ref{gaussdecay} we have 
$$
\|\gamma \|_W \prec 2+\kappa ^{-1/2} \prec (1-a^2)^{-1/2},
$$
and the desired estimate follows now from \eqref{eq:c17}. 
\end{proof}

\section{Complex Methods}


\subsection{Fock space\label{FM}}
  
We recall the definition and basic properties of the Fock space.
We  refer the reader to \cite{Folland}, 
\cite{Charlybook} for detailed proofs and also for a discussion of numerous applications
of this space to signal analysis and quantum mechanics.

The {\em  Fock space} $\cF$ is the Hilbert space of all entire functions such that
$$
 \|F\|^2_\cF:=\int_ \bC |F(z)|^2 e^{-\pi |z|^2} dm_z <\infty,
$$
where $dm_z$ is Lebesgue measure on $\bC $.

The natural inner product in $\cF$ is  denoted by  $\lng \cdot,\cdot \rng_\cF$.

We will use the following well-known   facts:

(a)  The point evaluation is a bounded linear functional in $\cF$, 
 and  the  corresponding reproducing kernel is the function $w\mapsto
 e^{\pi \bar{z} w }$, i.e., 
\beq
\label{reproducing} 
F(z) = \lng F, e^{\pi \bar{z} \cdot } \rng_\cF \, ,  \qquad    F\in \cF.
\eeq

(b) One defines the Bargmann transform of a function $f\in L^2(\bR)$  by  
$$
  f \mapsto \cB f(z)=F(z)= 2^{1/4}e^{-\pi z^2 /2}\int_\bR f(t)e^{-\pi
    t^2}e^{2\pi t z} dt. 
$$
  
 The Bargmann transform is a unitary mapping from $L^2(\bR)$ onto $\cF$.

(c) In what follows we  identify $\bC$ and $\bR^2$. In particular
for each $\zeta=\xi +i\eta \in \bC$ we write 
$\pi_\zeta=\pi_{(\xi,\eta)}$. 
 Define the Fock space shift $\beta_\zeta: \cF \to \cF$ 
 by
$$
 \beta_\zeta F(z) = e^{i\pi \xi \eta} e^{-\pi |\zeta|^2/2}e^{\pi {\zeta} z} F(z-\bar{\zeta}).
$$
Then $\beta _\zeta $ is unitary on $\cF $,  and the Bargmann transform intertwines
the Fock space shift and the time-frequency shift: 
\beq
\label{intertwines}
\beta_\zeta \cB = \cB \pi_\zeta.
\eeq
 
(d) 
\beq
\label{ed}
\cB \vf = 2^{-1/4},
\eeq
here as above $\vf$ is the Gaussian function.

(e)  It follows from \eqref{intertwines}  and \eqref{ed}  that
$$
\cB{\pi_\zeta\vf}  = 2^{-1/4} e^{i\pi \xi \eta} e^{-\pi |\zeta|^2/2} e^{\pi \zeta z}\, .
$$
 Taking into account the reproducing property  \eqref{reproducing}, 
we can rewrite the frame property \eqref{Eq:2.8} of $\cG (\vf , \Lambda)$  as the
sampling inequality  
$$
 A\|F\|^2_\cF \leq \frac 1 {2}\sum_{\lambda\in \Lambda} |F(\overline{\lambda})|^2 e^{-\pi |\lambda|^2}
 \leq  B\|F\|^2_\cF, \qquad F\in \cF.
$$ 
In the case of square lattice, $\Lambda$ is symmetric with respect to the real line, and we have 
\beq
\label{sampling} 
 A\|F\|^2_\cF \leq \frac 1 {2}\sum_{\lambda\in \Lambda} |F(\lambda)|^2 e^{-\pi |\lambda|^2}
 \leq  B\|F\|^2_\cF, \qquad  F\in \cF.
 \eeq 
 
(f)
Let $1/2< a < 2$,  and let $w\in \bC$, $w\ne 0$. Consider the entire function
$\Phi_{a, w}(z)=e^{a\bar w z^2/w}$.  
Then
$$
|\Phi_{a,w}(z)| \asymp e^{a|z|^2}, \qquad |z-w|< 1.
$$
This statement can be checked by 
direct inspection.   
 
 \subsection{Reformulation of the main result}
 
The remaining part of Theorem \ref{th:1} can now be reformulated as follows.
 
\begin{th1}
\label{th:2}
Let $\Lambda(a)= a  \bZ^2$, and let $A(a)$ be the best possible constant in the left hand side inequality of  \eqref{sampling}. 
Then for $1/2<a<1$ we have 
$$
A(a ) \prec 1-a^2. 
$$
\end{th1}

To prove this theorem we need to find a constant $K$ and functions $F=F_a \in \cF$ 
such that
\beq
\label{example}
K(1-a^2)\|F_a\|^2\ge \sum_{\lambda\in \Lambda(a)} |F_a(\lambda)|^2 e^{-\pi |\lambda|^2}.
\eeq

\subsection{Weierstrass $\sigma$-function.} The construction of the functions $F_a$ in the
next section is motivated by the properties of the classical Weierstrass $\sigma$-function.
Let us recall its definition and basic properties. We refer the reader to \cite{Ahiezer}
for a systematic study of this function and also to \cite{superframes}
for its  applications in  Gabor analysis. 

Given a lattice $\Lambda \subset \bC$ we denote
$$
\sigma(\Lambda, z) = z \prod_{\lambda \in \Lambda \setminus \{0\}}
\Bigl(
1 - \frac z \lambda 
\Bigr)
e^{\frac z \lambda + \frac 12 \left (\frac z \lambda \right  )^2 }.
$$
This product converges uniformly on compact sets in $\bC$ to an entire function with 
$\Lambda$ as the zero set. This is a function of order $2$; moreover there exists 
$d_\Lambda \in \bC$ such that
$$
|\sigma(\Lambda, z)e^{d_\Lambda z^2}| \asymp e^{\frac \pi 2 s(\Lambda)^{-1}|z|^2}, \qquad 
\dist(\Lambda, z)\ge  \varepsilon>0.
$$ 
Here $s(\Lambda)$ is the area of the fundamental domain of $\Lambda$.
See \cite{Hayman}  and also \cite{superframes}. 

Once again, let $\Lambda(a)= a\bZ^2$.  A direct inspection shows that $d_{\Lambda(a)}=0$, so that 
\beq
\label{eq:s3}
|\sigma(\Lambda(a), z)| \asymp e^{\frac \pi 2 a^{-2}|z|^2},
\qquad \dist(\Lambda_a, z)\ge  \varepsilon>0.
 \eeq 

This relation allows one to mimic the weight function $e^{\pi
|z|^2/2}$ in the definition of the Fock space by the absolute value
of an analytic function.

 \section{Proof of   \eqref{example}}

\subsection{Explicit construction}
If $a$ is in a compact subinterval of  $(1/2,1)$,  one can take $F_a=1$
and obtain \eqref{example} with some appropriate constant $K$. 
Therefore, from now on, we assume that $a$  is sufficiently close to 1, say $0.999<a<1$. 
Given such   
$a$,  we take 
  $R=R(a)$  such that
$$
2(1-a^2 )  < R^{-3/2} < 4( 1-a^2)
$$
and  
$$
n_R:=\pi (1-R^{-3/2}) R^2 \in \mathbb N.
$$

We  need  some additional notation: 

\begin{align*}
b^2&=1-R^{-3/2}, 
\\
\zeta_{m,n}&=b^{-1}(m+in),\notag\\ 
Q_{m,n}&=\{x+iy\in \bC: |x-b^{-1}m|< b^{-1}/2,   |y-b^{-1}n|<  b\inv /2\} , \notag\\  
D_R&=\{ z \in \bC: |z| < R\} ,\notag\\ 
 D'_R&= \cup \{ Q_{m,n}: |\zeta_{m,n}| < R- 3 \} , \ D''_R = D_R \setminus D'_R, \notag\\
\cN_R&=\{ (m,n)\in \bZ ^2:   Q_{m,n} \subset D'_R\}, \notag\\  
 q_R&=\card \cN_R, \ p_R= n_R-q_R \, . \notag
\end{align*}

  We have
  \[
  \{z: R-1< |z|< R \} \subset D''_R \subset  \{z: R-4< |z|< R \}.
  \]
Using the appropriate segments of radii of the disc $D_R$ we split $D''_R$ into the "sectors" $A_k$:
 \[
D''_R=\bigcup_{k=1}^{p_R}A_k,
\]
 such that 
$$
\label{eq:20}
m< \diam A_k < M, \qquad \area A_k= b^{-2}
$$  
for some $m,M$ independent of $a$.   
Denote the center of mass of $A_k$ by 
\beq
\label{eq:22}
\zeta_k= b^2\int_{A_k}\zeta dm_\zeta.
\eeq
We can find $c$ independent of $a$ such that
$$
\{w: |w-\zeta_k|< c\} \subset A_k, \qquad k=1,\ldots, p_R.
$$

We are going to verify the estimate  \eqref{example} for the function 
\beq
\label{eq:09}
F_a(z)= z \prod_{(m,n)\in \cN_R \setminus (0,0)} \left ( 1 - \frac z {\zeta_{m,n}} \right ) 
         \prod_{k=1}^{p_R}\left (1 - \frac z{\zeta_k} \right ).
\eeq

The zero set of the function $F_a$ is 
\begin{equation}
\label{eq:911}
\cZ_a=\{\zeta_{m,n}:|\zeta_{m,n}|<R-3\}\cup\{\zeta_k\}_{k=1}^{p_R}\, .
\end{equation}
By construction, the total number of zeros of $F_a$ is $n _R = \pi R^2b^2$.   

In order to  prove \eqref{example}, we need to estimate both $\|F_a\|_\cF^2$  and 
$$
\|F_a\|^2_a:= \sum_{m,n\in \bZ}  |F_a(a(m+in))|^2 e^{-\pi a^2(m^2+n^2)}.
$$  

\subsection{Estimate of $\|F_a\|_\cF^2$. } To estimate the norm of $F_a$ in the Fock space, we compare 
the logarithm of the modulus of the polynomial $F_a$ to a subharmonic function $u_R$ whose growth 
is easy to control.   
    
  Consider the subharmonic function
$$
u_R(z) 
=    \int_{|\zeta|<R} \log \Bigl| 1- \frac z \zeta
                                                \Bigr| dm_\zeta 
                                                =     \left \{ \begin{array}{ll} 
      \frac \pi 2 |z|^2, & |z|<R,  \\
      \pi R^2\log |z| -\pi R^2\log R + \frac \pi 2 R^2,  & |z|>R .
               \end{array}  \right .
$$

 An easy estimate  shows that
$$
 u_R(z) < \frac \pi 2 |z|^2, \qquad  |z| >R.
$$

We use the following approximation lemma.
\begin{lem}
\label{l:1nn}
For each $\varepsilon >0$ there exist constants $0<c(\varepsilon)<C(\varepsilon)< \infty$ such that \begin{equation}
\label{eq:09ab}
c(\varepsilon)  |F_a(z)| < e^{  b^2 u_R(z)} < C(\varepsilon)  |F_a(z)|, \qquad   \dist(z,\cZ _a) > \varepsilon.  
\end{equation}
and
\begin{equation}
\label{eq:09ac}
c(\varepsilon)  |F_a(z)| < e^{  b^2 u_R(z)}, \qquad \dist(z,\cZ_a) \le \varepsilon.
\end{equation}
\end{lem}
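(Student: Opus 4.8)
The plan is to establish \eqref{eq:09ab}--\eqref{eq:09ac} by comparing $\log|F_a|$ with the subharmonic function $b^2u_R$ cell by cell, using that the zero set $\cZ_a$ is a discretization of the measure $b^2\,dm$ on $D_R$: each zero of $F_a$ is the centroid of exactly one of the $n_R$ cells
\[
 \{Q_{m,n}:(m,n)\in\cN_R\}\cup\{A_k:1\le k\le p_R\},
\]
all of area $b^{-2}$, and these cells tile $D_R$. Writing $\log|1-z/\zeta|=\log|z-\zeta|-\log|\zeta|$, splitting the integral defining $u_R$ over the cells, and using that the centroid of a cell $E$ is a point $c_E\in\cZ_a$, I would first derive, for every $z\in\bC$,
\[
 b^{2}u_{R}(z)-\log|F_{a}(z)| = c_{a}+b^{2}\sum_{E}v_{E}(z),
\]
where the sum runs over the cells $E$, the constant $c_{a}=\sum_{\zeta\in\cZ_{a}\setminus\{0\}}\log|\zeta|-b^{2}\int_{D_{R}}\log|\zeta|\,dm_{\zeta}$ does not depend on $z$, and $v_{E}(z):=\int_{E}\log|z-\zeta|\,dm_{\zeta}-b^{-2}\log|z-c_{E}|$ is the potential-theoretic defect of replacing $b^{2}\mathbf 1_{E}\,dm$ by the point mass $\delta_{c_{E}}$. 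The lemma then reduces to two estimates uniform in $a$ (equivalently in $R$): that $c_a=O(1)$, and that $\sum_E v_E(z)=O_\varepsilon(1)$ whenever $\dist(z,\cZ_a)>\varepsilon$ while $\sum_E v_E(z)\ge -C$ for every $z$.

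Next I would estimate a single $v_E$. The signed measure $b^{2}\mathbf 1_{E}\,dm-\delta_{c_{E}}$ has zero total mass and, because $c_E$ is the centroid, zero first moment, so a multipole expansion of $\zeta\mapsto\log|z-\zeta|$ about $c_E$ gives $|v_E(z)|\prec\dist(z,E)^{-2}$ once $\dist(z,E)\ge 1$; for an axis-parallel square cell $Q_{m,n}$ one also has $\int_{Q}(\zeta-c_{Q})^2\,dm=\int_{Q}(\zeta-c_{Q})^3\,dm=0$ by symmetry, which improves this to $|v_E(z)|\prec\dist(z,E)^{-4}$. For $\dist(z,E)<1$ with $\dist(z,c_E)>\varepsilon$ one has $|v_E(z)|\prec 1+|\log\varepsilon|$ (the logarithmic singularity is integrable and $\log|z-c_E|$ is bounded there), while $v_E(z)\to+\infty$, staying bounded below, as $z\to c_E$; in particular $\int_E\log|z-\zeta|\,dm_\zeta\ge -\pi/2$ gives $v_E(z)\ge -C$ with $C$ absolute. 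Summing over $E$ for fixed $z$: only $O(1)$ cells lie within distance $1$ of $z$; the square cells at distance $\ge 1$ contribute $\sum\dist(z,\cdot)^{-4}=O(1)$ by the two-dimensional count of unit-area cells; and the $p_R\asymp R$ sectors $A_k$, whose centroids form a uniformly separated set contained in a fixed-width annulus about $|w|=R$, contribute $\sum_k\dist(z,A_k)^{-2}=O(1)$ uniformly in $R$, since any unit shell about $z$ meets only $O(1)$ of them and they reach distance at most $\prec R$. Running the same count at $z=0$, together with the trivial bound on $b^{2}\int_{Q_{0,0}}\log|\zeta|\,dm_\zeta$, yields $c_a=O(1)$.

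Putting these together gives $b^{2}u_R(z)-\log|F_a(z)|=O_\varepsilon(1)$ for $\dist(z,\cZ_a)>\varepsilon$, i.e.\ \eqref{eq:09ab}, and $b^{2}u_R(z)-\log|F_a(z)|\ge -C$ for all $z$ (only the defect of the cell whose centroid is nearest to $z$ can be large, and it is large \emph{positive}), i.e.\ \eqref{eq:09ac}. The step I expect to be the main obstacle is the uniformity in $R$ of $\sum_E v_E(z)$: treating every cell as generic, with $|v_E|\asymp\dist^{-2}$, and using the two-dimensional cell count produces a spurious $\log R$ factor and hence only the useless bound $|F_a(z)|e^{-c\log R}\prec e^{b^{2}u_R(z)}\prec|F_a(z)|e^{C\log R}$. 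Avoiding it is exactly why the construction of $F_a$ places genuine lattice zeros inside $D'_R$ (so that their cells are squares with vanishing low-order moments) and only corrects the tail by a thin, essentially one-dimensional layer of sectors $A_k$ near $\partial D_R$.
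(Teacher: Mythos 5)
Your proposal is correct and follows essentially the same atomization argument as the paper: a cell-by-cell comparison of $\log|F_a|$ with $b^2u_R$ exploiting the vanishing of low-order moments --- through order $3$ on the square cells $Q_{m,n}$, which gives decay summable over the two-dimensional bulk, versus only the centroid condition on the sectors $A_k$, which suffices because they form a one-dimensional family near $|z|=R$. The only deviations are minor: you split off the $z$-independent constant $c_a$ rather than expanding $\log|1-z/\zeta|$ as a whole (the paper's Taylor remainder then produces the $\dist(0,Q_{m,n})^{-3}$ terms that your $c_a$ absorbs), and you obtain \eqref{eq:09ac} directly from the sign of the defect near a zero, whereas the paper deduces it from \eqref{eq:09ab} by the maximum principle applied to $F_a\Phi_{a,w}^{-1}$.
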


 \medskip
 
 \noindent{\bf Remark.} Since the set $\cN$ is invariant with respect to rotation 
by $\pi/2$ around the origin, we find that 
$$
\sum _{(m,n)\in \cN_R \setminus (0,0)} \frac{1}{m+in} = \sum
  _{(m,n)\in \cN_R \setminus (0,0)} \frac{1}{(m+in)^2} = 0 \, .
$$  
So  the  first factor on the right-hand side of
\eqref{eq:09} in the definition of $F_a$ can be written as  
\begin{multline}
V_R(z)=z  \prod_{(m,n)\in \cN_R \setminus (0,0)} \Bigl( 1 - \frac z {\zeta_{m,n}} \Bigr) \\= 
          z\prod_{(m,n)\in \cN_R \setminus (0,0)  } \Bigl ( 1 - \frac z {\zeta_{m,n}} \Bigr) 
                  \exp \Bigl(  {\frac z {\zeta_{m,n}}+ \frac 12 \Bigl(\frac z {\zeta_{m,n}}\Bigr)^2} 
                  \Bigr).
                  \label{n68}
\end{multline}              
Consequently, the function $V_R$ can be viewed as  a truncated  version of the  Weierstrass
 $\sigma$-function and estimates \eqref{eq:09ab}  and \eqref{eq:09ac}
 correspond to the 
 growth estimate \eqref{eq:s3} for  the  Weierstrass
 $\sigma$-function.

We postpone the proof of this
technical lemma until subsection \ref{last}. 
Assuming that Lemma \ref{l:1nn} is already proved, an estimate of  $\|F_a\|_\cF^2$  is  straightforward. 
 
\begin{lem}
\label{l:02n} 
$$
 \|F_a\|^2_\cF \succ R^{3/2}  \asymp (1-a^2)^{-1}.
$$
\end{lem}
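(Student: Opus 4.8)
The plan is to estimate $\|F_a\|_\cF^2$ from below by restricting the integral defining the Fock norm to a region where we have good control over $|F_a(z)|$, and then to use Lemma \ref{l:1nn} to replace $|F_a(z)|$ by the explicit exponential $e^{b^2 u_R(z)}$. Concretely, I would fix $\varepsilon$ small (say $\varepsilon = c/2$ with $c$ the constant from the inclusion $\{w:|w-\zeta_k|<c\}\subset A_k$), observe that the set $\{z : \dist(z,\cZ_a) > \varepsilon\}$ has density bounded below in every ball of radius $1$ (since the zeros of $F_a$ are $\asymp b^{-1}$-separated), and write
$$
\|F_a\|_\cF^2 = \int_\bC |F_a(z)|^2 e^{-\pi|z|^2}\, dm_z \succ \int_{\{|z|<R,\, \dist(z,\cZ_a)>\varepsilon\}} e^{2b^2 u_R(z)} e^{-\pi |z|^2}\, dm_z,
$$
using the left inequality in \eqref{eq:09ab}. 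For $|z|<R$ we have $u_R(z) = \tfrac\pi2 |z|^2$ exactly, so the integrand on that region equals $e^{\pi(b^2-1)|z|^2} = e^{-\pi R^{-3/2}|z|^2}$, since $b^2 = 1 - R^{-3/2}$.

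The next step is to evaluate $\int e^{-\pi R^{-3/2}|z|^2}\, dm_z$ over the annulus-minus-bad-set. The full planar integral of $e^{-\pi R^{-3/2}|z|^2}$ is exactly $R^{3/2}$, and the contribution of $\{|z| \ge R\}$ is negligible because $R^{-3/2} R^2 = R^{1/2} \to \infty$, so $\int_{|z|\ge R} e^{-\pi R^{-3/2}|z|^2}\, dm_z \asymp R^{3/2} e^{-\pi R^{1/2}}$, which is exponentially small. Removing the $\varepsilon$-neighbourhood of $\cZ_a$ costs only a bounded fraction of the mass in each unit ball (here is where the lower density bound on $\{\dist(z,\cZ_a)>\varepsilon\}$ enters, together with the fact that $e^{-\pi R^{-3/2}|z|^2}$ varies slowly — by a bounded factor — over any unit ball when $|z| < R$, since $R^{-3/2}|z| \cdot 1 \le R^{-3/2}\cdot R = R^{-1/2} \to 0$). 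Hence the restricted integral is still $\succ R^{3/2}$, giving $\|F_a\|_\cF^2 \succ R^{3/2}$. Finally, $R^{3/2} \asymp (1-a^2)^{-1}$ is immediate from the defining inequality $2(1-a^2) < R^{-3/2} < 4(1-a^2)$, i.e. $R^{3/2} \asymp (1-a^2)^{-1}$.

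The main obstacle, and the only point requiring real care, is justifying that deleting the $\varepsilon$-neighbourhood of the zero set does not destroy the lower bound — one must check that $\{z : |z|<R,\ \dist(z,\cZ_a)>\varepsilon\}$ carries a definite proportion of the Gaussian-type mass $e^{-\pi R^{-3/2}|z|^2}\, dm_z$. This follows from two facts established by the construction: the zeros $\zeta_{m,n}$ and $\zeta_k$ are separated by a constant multiple of $b^{-1} \asymp 1$, so in each ball $B(z_0,1)$ with $|z_0| < R$ the area of the $\varepsilon$-neighbourhood of $\cZ_a$ is at most a fixed fraction (less than $1$) of $\pi$; and the weight $e^{-\pi R^{-3/2}|z|^2}$ is comparable to $e^{-\pi R^{-3/2}|z_0|^2}$ throughout $B(z_0,1)$ with constants independent of $a$ and $z_0$. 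Tiling $D_{R/2}$, say, by such unit balls and summing the per-ball lower bounds reproduces $\succ \int_{D_{R/2}} e^{-\pi R^{-3/2}|z|^2}\, dm_z \succ R^{3/2}$, which suffices.
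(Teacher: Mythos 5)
Your argument is correct and follows essentially the same route as the paper: restrict the Fock-norm integral to the part of $D_R$ at distance $>\varepsilon$ from $\cZ_a$, use Lemma~\ref{l:1nn} to replace $|F_a(z)|^2e^{-\pi|z|^2}$ there by $e^{2b^2u_R(z)-\pi|z|^2}=e^{-\pi R^{-3/2}|z|^2}$, and check that deleting the neighbourhood of the zero set costs only a bounded fraction of the mass (the paper does this in polar coordinates, observing that each circle $|z|=r$ meets the good set in at least half its length, while you tile by unit balls using the separation of the zeros and the slow variation of the weight --- both work). One small slip: the bound $|F_a(z)|\succ e^{b^2u_R(z)}$ that you need is the \emph{right-hand} inequality in \eqref{eq:09ab}, not the left-hand one.
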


\begin{proof}
Let $\Omega=\{z\in\mathbb C:|z|<R,\,\dist(z,\mathcal Z_a)>1/10\}.
$ Using Lemma~\ref{l:1nn}, we find that 
$$
\|F_a\|^2_\cF \succ  \int _\Omega
e^{2 b^{2}u_R(z)-\pi |z|^2} dm_z =    I(a,R).
$$
We use that $1-b^2= R^{-3/2}$. Furthermore, for every $1<r<R$, the circle $z:|z|=r$ intersects with $\Omega$
on at most half of its length.
Therefore,   
$$
I(a,R) \succ \int_1^{R}e^{-\pi R^{-3/2}t^2}  t dt=
     \frac{R^{3/2}}2\int_{R^{-3/2}}^{R^{1/2}}e^{-\pi u}  du \asymp R^{3/2},
$$ 
and the statement of the Lemma now follows.
\end{proof}
  
\noindent{\bf Remark. } {\em  A similar argument shows that } 
\beq
\label{eq:24}
\int_{|z|>R-4}|F_a(z)|^2 e^{-\pi |z|^2}  dm_z \to 0, 
\eeq
{\em as $a\to1$ or equivalently, as $R\to \infty$.}

\subsection{Estimate of $\|F_a\|_a^2$. } 
\begin{lem}
\label{l:03}
For $F_a$ as in \eqref{eq:09} we have 
\begin{equation}
\label{eq:09c}
\|F_a \|^2_a= \sum_{m,n} |F_a(a(m+in))|^2 e^{-\pi a^2(m^2+n^2)}\asymp 1.
\end{equation}
\end{lem}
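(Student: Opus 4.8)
The plan is to estimate the sum $\|F_a\|_a^2 = \sum_{m,n} |F_a(a(m+in))|^2 e^{-\pi a^2(m^2+n^2)}$ by comparing $|F_a(z)|$ on the lattice $\Lambda(a) = a\bZ^2$ with the smooth comparison function $e^{b^2 u_R(z)}$ supplied by Lemma~\ref{l:1nn}. The point is that the lattice points of $\Lambda(a)$ all lie at distance bounded below from the zero set $\cZ_a$ of $F_a$: indeed $\cZ_a$ is (up to the perturbed boundary points $\zeta_k$) the rescaled lattice $b^{-1}\bZ^2$ with $b^{-1} = (1-R^{-3/2})^{-1/2} \ne a$ when $a$ is close to $1$ but not equal to it, so a separation estimate $\dist(a(m+in), \cZ_a) \ge \varepsilon_0 > 0$ holds uniformly for all $(m,n) \in \bZ^2$ (this requires a short diophantine-type check near $|z| = R$, where the $\zeta_k$ sit, using that the $A_k$ have diameter in $(m,M)$ and that $\Lambda(a)$ has mesh $a \asymp 1$; away from $|z|=R$ it is immediate). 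Given this, the two-sided bound \eqref{eq:09ab} applies at every lattice point, and
$$
\|F_a\|_a^2 \asymp \sum_{m,n} e^{2b^2 u_R(a(m+in)) - \pi a^2(m^2+n^2)}.
$$

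Next I would evaluate this last sum using the explicit formula for $u_R$. For $|z| = a|m+in| < R$ we have $u_R(z) = \frac\pi2 |z|^2 = \frac\pi2 a^2(m^2+n^2)$, so the exponent is $\pi a^2(m^2+n^2)(b^2 - 1) = -\pi R^{-3/2} a^2(m^2+n^2)$, which is $\le 0$; the corresponding part of the sum is a Gaussian-type sum over the disc of radius $R/a$, and since $R^{-3/2} \asymp 1-a^2$ while the number of lattice points out to radius $\rho$ is $\asymp \rho^2$, one gets $\sum_{a|m+in|<R} e^{-\pi R^{-3/2} a^2(m^2+n^2)} \asymp R^{-3/2}\cdot R^2 \cdot R^{-1/2}$... more carefully, replacing the sum by the integral $\int_0^{R/a} e^{-\pi R^{-3/2} a^2 t^2}\, t\, dt \asymp R^{3/2}$, exactly as in the proof of Lemma~\ref{l:02n}. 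Wait — that would give $\|F_a\|_a^2 \asymp R^{3/2}$, not $\asymp 1$. The resolution must be that the inner disc $|z| < R$ contributes only $O(1)$ because the relevant weight is not $e^{-\pi R^{-3/2}a^2(m^2+n^2)}$ summed freely: one has to recall that $F_a$ is a polynomial of degree $n_R = \pi R^2 b^2$ with all zeros in $|z| \le R$, so for $|z| > R$, $|F_a(z)| \asymp e^{b^2 u_R(z)}$ with $u_R(z) = \pi R^2 \log|z| - \pi R^2\log R + \frac\pi2 R^2 < \frac\pi2|z|^2$, and the exponent $2b^2 u_R(z) - \pi|z|^2$ decays; so the tail $|z| > R$ contributes a vanishing amount by \eqref{eq:24}. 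The $O(1)$ lower bound then comes from a bounded number of lattice points near the origin where $|F_a| \asymp e^{b^2 u_R} \asymp 1$ and $e^{-\pi a^2(m^2+n^2)} \asymp 1$, while the $O(1)$ upper bound forces me to be more precise: the correct count is that $\sum_{a|m+in| < R}$ of $e^{-\pi R^{-3/2}a^2(m^2+n^2)}$ should be compared against $\|F_a\|_\cF^2 \asymp R^{3/2}$ from Lemma~\ref{l:02n}, and in fact \eqref{eq:09c} must be read together with \eqref{example}: the honest claim is the ratio is $\asymp 1$ only after I realize that the sum over the \emph{full} lattice (not the truncation to $|z|<R$) of the Fock-weighted values is dominated, term by term, by the integral $\int_{\bC} |F_a(z)|^2 e^{-\pi|z|^2}\, dm_z = \|F_a\|_\cF^2$ up to the oversampling constant — but here the mesh is $a<1$, so oversampling gives $\|F_a\|_a^2 \prec \|F_a\|_\cF^2$, i.e. $\prec R^{3/2}$, which combined with the lower frame bound would only give \eqref{example} up to the same order, not the gain of a factor $1-a^2$.

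So the actual mechanism, and the step I expect to be the main obstacle, is the sharp \emph{lower} bound $\|F_a\|_\cF^2 \succ R^{3/2}$ (Lemma~\ref{l:02n}, already available) combined with a sharp \emph{upper} bound $\|F_a\|_a^2 \prec 1$: the Fock norm is large precisely because $F_a$ is large on the annulus $R-4 < |z| < R$ away from the lattice-like zeros, a region that contributes to the integral but where the sampling lattice $\Lambda(a)$ — having a different mesh than $b^{-1}\bZ^2$ — lands close to the zeros $\zeta_k$ infinitely often in a controlled way, keeping the sampled values small. Concretely: for $z = a(m+in)$ with $R - C \le a|m+in| \le R$, one argues that $z$ lies within a bounded distance of some $\zeta_k$ or $\zeta_{m',n'}$, so that the factor $|1 - z/\zeta_k|$ (resp.\ $|1 - z/\zeta_{m',n'}|$) is $\le$ const$\,/R$ — this is where the center-of-mass choice \eqref{eq:22} and the explicit radial splitting into sectors $A_k$ of area $b^{-2}$ pay off — giving an extra decay factor that kills the $R^{3/2}$ growth and leaves $\|F_a\|_a^2 \asymp 1$. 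I would carry this out by (i) establishing the uniform separation $\dist(\Lambda(a), \cZ_a) \ge \varepsilon_0$ and invoking \eqref{eq:09ab}; (ii) splitting $\|F_a\|_a^2$ into the contribution of $|z| < R - C$, handled by the explicit $u_R$ formula giving a Gaussian sum $\asymp$ const; (iii) showing the contribution of $R - C \le |z|$ is $o(1)$ via the annular smallness \eqref{eq:24} transported from the integral to the discrete sum using the mean-value / Bernstein-type inequality $|F_a(a\zeta)|^2 e^{-\pi a^2|\zeta|^2} \prec \int_{|w - a\zeta| < 1} |F_a(w)|^2 e^{-\pi|w|^2}\, dm_w$ valid for entire $F_a$ (plus the separation so that the balls have bounded overlap). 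The delicate point throughout is book-keeping the constants so they stay independent of $a$ as $a \to 1$, $R \to \infty$.
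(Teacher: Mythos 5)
Your starting point is backwards, and this is not a technicality but the heart of the matter. You claim a uniform separation $\dist(a(m+in),\cZ_a)\ge\varepsilon_0>0$ and then apply the two-sided estimate \eqref{eq:09ab} at every lattice point. The separation is false: for $(m,n)\in\cN_R$ the sample point $a(m+in)$ lies at distance $(b^{-1}-a)|m+in|\asymp R^{-3/2}|m+in|\le C R^{-1/2}\to 0$ from the zero $\zeta_{m,n}$ (and $a\cdot 0$ \emph{is} a zero). You yourself observe that if \eqref{eq:09ab} held at all the lattice points the sum would come out $\asymp R^{3/2}$, not $\asymp 1$; your proposed resolution --- that the discrepancy is accounted for by the annulus $R-C\le |z|\le R$ --- cannot work, because the Gaussian sum $\sum e^{-\pi R^{-3/2}a^2(m^2+n^2)}\asymp R^{3/2}$ accumulates its mass throughout the disc (essentially for $|m+in|\lesssim R^{3/4}$), far from the boundary annulus. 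The actual mechanism is precisely the proximity you denied: each sample point sits within $\asymp R^{-3/2}|m+in|$ of a zero of $F_a$, and a Schwarz-lemma argument (the paper divides $F_a$ by the local majorant $\Phi_{m,n}$ and controls the derivative of the bounded quotient by Cauchy's estimate) yields the extra factor $R^{-3}(m^2+n^2)$ in $|F_a(a(m+in))|^2e^{-\pi a^2(m^2+n^2)}$; summing $R^{-3}(m^2+n^2)e^{-\pi R^{-3/2}a^2(m^2+n^2)}$ over the disc gives $R^{-3}\int_0^R t^3 e^{-\pi R^{-3/2}t^2}\,dt\asymp 1$. That factor of $(\dist(\cdot,\cZ_a))^2$ is exactly the gain over the naive count, and it operates on the whole disc, not only near $|z|=R$.

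Your lower bound is also unjustified: there are no lattice points ``where $|F_a|\asymp e^{b^2u_R}\asymp 1$'' --- every sample point near the origin is within $O(R^{-3/2})$ of a zero, so the individual terms there are $O(R^{-3})$, and no single term is of order $1$. The bound $\|F_a\|_a^2\succ 1$ is obtained in the paper not pointwise but by combining $\|F_a\|_\cF^2\succ R^{3/2}$ (Lemma \ref{l:02n}) with the already established lower frame bound $A(a)\succ 1-a^2$ (Proposition \ref{lower}). Your treatment of the exterior region via the mean-value inequality and \eqref{eq:24} is correct and matches the paper's handling of $\Sigma_2$.
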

 \begin{proof}
We have
$$ 
 \|F_a \|^2_a= 
 \Bigl(
 \sum_{(m,n)\in \cN_R} + \sum_{(m,n)\not\in \cN_R} 
 \Bigr) |F_a(a(m+in))|^2 e^{-\pi a^2(m^2+n^2)}
 = \Sigma_1(a) + \Sigma_2(a).
$$

In order to estimate $ \Sigma_1(a)$,   we observe that 
$F_a(\zeta_{m,n})=0$, for $(m,n)\in \cN_R$ and  
$$
|\zeta_{m,n}- a(m+in)| = (b^{-1}-a)|m+in| < 2R^{-3/2}|m+in|.
$$
Since $|m+in|< R$ for $(m,n)\in \cN_R$, and 
$a$ is sufficiently close to $1$, we have
$$
|\zeta_{m,n}- a(m+in)| < \frac 15, \qquad (m,n)\in \cN_R. 
$$

Denote $D_{m,n}=\{z\in \bC: |z -\zeta_{m,n}| < 1/4\}$. By
part (f) of subsection~\ref{FM} there exists a function  
$\Phi_{m,n}(z)$ that is holomorphic on $D_{m,n}$ and satisfies
\beq
\label{eq:s4a}
|\Phi_{m,n}(z)| \asymp e^{b^2\frac \pi 2 |z|^2}, \qquad  z\in D_{m,n}.
\eeq

Then for  each $(m,n)\in \cN_R$ the function
$$
\Psi_{m,n}(z) = \frac {F_a(z)}{\Phi_{m,n}(z)} 
$$
is  holomorphic in $D_{m,n}$ and possesses the properties 
$$
   \Psi _{m,n}(\zeta_{m,n})= 0 \ \mbox{and}  \ |\Psi _{m,n}(z)|\prec 1.
$$  
   
By Cauchy's theorem, the functions $\Psi'_{m,n}$ are
uniformly bounded on $D^*_{m,n}=\{z\in \bC: |z -\zeta_{m,n}| < 1/5\}$, and hence
$$
|\Psi_{m,n}(a(m+in))| \prec   |(a-b^{-1})(m+in)| \prec  R^{-3/2}(m^2+n^2)^{1/2}, \quad (m,n)\in \cN_R.
$$ 
Returning to the  function $F_a$ and using
\eqref{eq:s4a}   once again, we obtain
$$
|F_a(a(m+in))|^2e^{-\pi a^2 (m^2+n^2)}  \prec 
R^{-3}(m^2+n^2) |\Phi_{m,n}(a(m+in))|^{2(1-b^{-2})}.
$$
The mean value inequality for $\bigl|\Phi_{m,n}(a(m+in))^{2(1-b^{-2})}\bigr|$ now yields
\begin{multline*}
|F_a(a(m+in))|^2e^{-\pi a^2 (m^2+n^2)} \prec 
 R^{-3}(m^2+n^2) 
    \int_{D_{m,n}} |\Phi_{m,n}(z)|^{2(1-b^{-2})} dm_z \\ \prec 
    R^{-3} \int_{D_{n,m}}  |z^2| e^{\pi (b^2-1)|z|^2}dm_z.
\end{multline*}
Since all discs $D_{m,n}$  are disjoint we obtain 
\beq
\label{eq:32} 
\Sigma_1(a) \prec  R^{-3}\int_{|z|<R}|z^2| e^{\pi (b^2-1)|z|^2}dm_z 
\prec    R^{-3}\int_0^R t^2 e^{-\pi R^{-3/2}t^2} tdt \prec 1.
\eeq

\medskip

Finally, for arbitrary $(m,n)$, the mean value theorem yields
 \[
|F_a(a(m+in))|^2e^{-\pi a^2 (m^2+n^2)} \prec  \\
      \int_{D_{m,n}} |F_a(z)|^2 e^{-\pi |z|^2} dm_z,
 \]
and, by \eqref{eq:24} we obtain
\beq
\label{eq:a14}
\Sigma_2(a)  \prec \int_{|z|>R-4} |F_a(z)|^2 e^{-\pi |z|^2} dm_z \to  0, \ 
\mbox{as}  \  a \nearrow 1. 
\eeq

The estimates \eqref{eq:32} and \eqref{eq:a14} yield
\begin{equation}
\|F_a \|^2_a= \sum_{m,n} |F_a(a(m+in))|^2 e^{-\pi a^2(m^2+n^2)}\prec 1.
\label{e681}
\end{equation}
The opposite relation follows from Lemma~\ref{l:02n} and from the lower estimate on $A(a)$ established in 
Proposition~\ref{lower}. 
\end{proof}
    
\medskip
 
Relation \eqref{example} follows immediately from Lemma~\ref{l:02n} and the estimate \eqref{eq:09c} (or even \eqref{e681}).
 
\medskip
 

\subsection{Proof of the approximation lemma}
\label{last}
The proof of Lemma \ref{l:1nn} is based on atomization techniques, see e.g.
\cite{atomization}. First we rewrite \eqref{eq:09ab} and
\eqref{eq:09ac}  in an additive form. We must prove that 
 
\begin{equation}
\label{eq:09a}
\log |F_a(z)|  = b^2 u_R(z)   + O(1), \qquad \dist(z,\mathcal{Z}_a) > \varepsilon,
\end{equation}
and 
$$
\log |F_a(z)|  \leq   b^2 u_R(z) +O(1), \qquad \dist(z,\cZ _a) \le  \varepsilon,
$$
where $\cZ _a=\{\zeta_{m,n}: (m,n)\in \cN_R\}\cup
\{\zeta_k\}_{k=1}^{p_R}$ as in \eqref{eq:911},
and the quantities $O(1)$  in the right-hand sides of these relations are
 bounded uniformly with respect to all $a\in (0.999,1)$ and depend only
 on $\varepsilon$.  It suffices to prove \eqref{eq:09a}, the second
 relation will then follow by the maximum principle applied to $F_a\Phi_{a,w}^{-1}$ where 
$\Phi_{a,w}$ is defined in part (f) of subsection~\ref{FM}.  
 
Let $V_R$ be defined by \eqref{n68},
$$
   v_R(z)= b^{2} \int_{D'_R} \log\left  |1 - \frac z{\zeta} \right |  dm_\zeta,
$$ 
 and let
$$
 w_R(z)= b^{2} \int_{D''_R} \log \left |1 - \frac z{\zeta} \right |dm_\zeta,
\qquad       W_R(z)= \prod_1^{p_R}\left (1- \frac z {\zeta_k} \right ).
$$
  
 We have 
\begin{multline} 
\label{eq:b06}
  \log |F_a(z)| -  b^2 u_R(z) = \\ \left (\log |V_R(z)| -    v_R(z) \right ) +
       \left (\log |W_R(z)| -    w_R(z) \right )   = \mathfrak S_1(R,z)+ \mathfrak S_2(R,z),
 \end{multline}
and we estimate separately each summand in the right-hand side of \eqref{eq:b06}.

Let 
 $ \dist(z,\cZ _a) > \varepsilon$.  We have 
\begin{multline*}
 \mathfrak S_1(R,z) =  \log |V_R(z)| - v_R(z)=  
b^2\int_{Q_{0,0}} \left ( 
                                  \log  |z| -
                                             \log\left |1 - \frac z \zeta\right |  
             \right ) dm_\zeta
 \\ 
+  b^2\Bigl(
     \sum_{\substack{
                  (m,n)\in \cN_R\setminus\{0,0\}, \\ \mbox{dist}(z,Q_{m,n})\leq 10
                 }} +
      \sum_{\substack{(m,n)\in \cN_R\setminus\{0,0\}, \\ \mbox{dist}(z,Q_{m,n})> 10
                }  }    
    \Bigr)  
            \underbrace{
             \int_{Q_{m,n}} \left ( 
                                  \log \left |1-\frac z{\zeta_{m,n} } \right | -
                                             \log\left |1 - \frac z \zeta\right |  
             \right ) dm_\zeta
                         }
                            _{j_{m,n}}.
 \end{multline*}   

 It suffices to estimate just the second sum  in the right-hand side because the first sum
contains only a finite number (at most $1000$, say) of uniformly bounded terms, and the first
integral is bounded uniformly in $z$. 

 Denote  $L(\zeta)=\log(1 - z/\zeta)$. We then have 
$$
 j_{m,n}= \Re \Bigl [ \int_{Q_{m,n}}\bigl(
L(\zeta)-L(\zeta_{m,n})dm_\zeta                                            
                                            \bigr )\Bigr].
$$
We apply the second order Taylor expansion with the remainder term in the  integral form:
\[
L(\zeta)-L(\zeta_{m,n}) = L'(\zeta_{m,n})(\zeta-\zeta_{m,n}) + 
    \frac 12 L''(\zeta_{m,n})(\zeta-\zeta_{m,n})^2 + 
         \frac 1 2 \int_{\zeta_{m,n}}^\zeta L'''(s) (\zeta-s)^2 ds.
 \]        
 and use the fact that
\[
\int_{Q_{m,n}} (\zeta-\zeta_{m,n})dm_\zeta=
   \int_{Q_{m,n}} (\zeta-\zeta_{m,n})^2dm_\zeta=0.
\] 
Then 
\begin{multline*}
|j_{m,n}|= \Bigl | 
      \int_{Q_{m,n}} \int_{\zeta_{m,n}}^\zeta 
      {(\zeta-s)^2} \left ( \frac 1 {(s-z)^3} -  \frac 1{s^3} \right )  ds\, dm_\zeta
\Bigr|  
  \\  \prec \frac 1 {\mbox{dist}(z,Q_{m,n})^3}+ \frac 1  {\mbox{dist}(0,Q_{m,n})^3},
\end{multline*}
which implies that 
$$
\mathfrak S_1(R,z)  = O(1).
$$

\medskip

Finally, 
\begin{multline*}
\mathfrak S_2(R,z)= | W_R(z) - w_R(z) |=    \\ \Bigl | b^2\Bigl (  
 \sum_{ \mbox{dist}( z, A_k) \le M+10} +\sum_{\mbox{dist}( z, A_k) >M+10} 
                                \Bigr)
\underbrace{
\int_{A_k} \left (\log \left |1 - \frac z{\zeta_k} \right |-
             \log \left |1- \frac z{\zeta} \right | \right )dm_\zeta
                    }_{i_k} 
                                  \Bigr|,
\end{multline*}                    
with $M$ as in \eqref{eq:20}.                                
  The first term in the right-hand side contains just a finite number of summands and 
 is always bounded. 
 In order to estimate each $i_k$ from the second term we use the Taylor formula (now of the 
 first order) with the same function $L(\zeta)=\log (1-  z / \zeta)$.
The choice of $\zeta _k$ in \eqref{eq:22} implies that 
 $\int _{A_k} (\zeta -\zeta _k) dm_\zeta    = 0$. Arguing as above, 
 we  obtain
  \[
 |i_k| \prec \frac 1 {\dist (z, A_k)^2} + \frac 1  {\dist (0, A_k)^2},  
 \]
 whence  
$$
\mathfrak S_2(R,z)  = O(1).
$$
This completes the proof of Lemma \ref{l:1nn}.  $\Box$


\begin{thebibliography}{99}

\bibitem[1]{Ahiezer} N.~I.~Akhiezer, {\em Elements of the theory of elliptic functions,}
American Mathematical Society, Providence, RI, 1990. 

\bibitem[2]{BHW95} J.~J. Benedetto, C.~Heil, and D.~F. Walnut,{\em Differentiation and the Balian--Low theorem}, J. Fourier Anal. Appl. {\bf 1(4)} (1995) 355--402.

\bibitem[3]{Christensen}  O.~Christensen.
\newblock {\em An introduction to frames and {R}iesz bases}.
\newblock Applied and Numerical Harmonic Analysis. Birkh\"auser Boston Inc.,
  Boston, MA, 2003.
 


\bibitem[4]{Folland} G.~Folland, {\em Harmonic analysis in phase space},  
Princeton University Press, Princeton, NJ, 1989.

\bibitem[5]{Gabor}  D.~Gabor, {\em Theory of communication}, J. IEE (London)
{\bf 93(III)} (1946) 429--457.

\bibitem[6] {Charlybook} K.~Gr\"ochenig, {\em Foundations of time-frequency analysis},  
Birkh\"auser, Boston, 2001. 

\bibitem[7]{superframes} K.~Gr\"ochenig, Yu.~Lyubarskii,
{\em Gabor (Super)Frames with Hermite Functions},
Math.\ Annalen {\bf 345} (2009), 267--286. 
 
\bibitem[8]{Hayman} W.~K.~Hayman, {\em The local growth of the power series: a survey of the 
Wiman--Valiron method},  Canad. math. bull. {\bf 17} (1974) 317--358. 

\bibitem[9]{heil} C.~Heil, {\em History and evolution of the density theorem for Gabor frames},  
J. Fourier Anal. Appl. {\bf 13} (2007) 113--166.


\bibitem[10]{janssen} A.~J.~E.~M.~Janssen, {\em Gabor representation of generalized functions}, 
J. Math. Anal. Appl. {\bf 83} (1981) 377--394. 

\bibitem[11]{janssen82} A.~J.~E.~M. Janssen, {\em Bargmann transform, Zak transform, and coherent states},
J. Math. Phys. {\bf 23} (1982) 720--731.

\bibitem[12]{JS02} A.~J.~E.~M. Janssen, T.~Strohmer, {\em Hyperbolic secants yield Gabor frames},
Appl. Comp. Harm. Anal. {\bf 12} (2002) 259--267.

\bibitem[13]{lyu} Yu.~Lyubarski, {\em Frames in the Bargmann space of entire functions}, 
Entire and subharmonic functions, 167--180, Adv. Soviet Math., 11, Amer. Math. Soc., Providence, RI, 1992.  

\bibitem[14]{atomization} Yu.~Lyubarskii, E.~Malinnikova, {\em On approximation of subharmonic functions}, 
J. Anal. Math. {\bf 83} (2001), 121--149.

\bibitem[15]{LS} Yu.~Lyubarskii, K.~Seip, {\em Convergence and summability of Gabor expansions at the Nyquist density}, J. Fourier Anal. Appl. {\bf 5} (1999) 127--157.   

\bibitem[16] {seip} K.~Seip,  Interpolation and sampling in spaces of analytic functions, {\em  University Lecture Series, 33}. American Mathematical Society, Providence, RI, 2004. xii+139 pp.

\bibitem[17]{sw} K.~Seip, R.~Wallsten, {\em Density theorems for sampling and interpolation in the Bargmann-Fock space, II}, J. Reine Angew. Math. {\bf 429} (1992) 107--113.  

\bibitem[18]{So} P. Sondergaard, personal communication. 

\bibitem[19]{str09} Th. Strohmer, personal communication. 

\bibitem[20]{walnut92} D.~F.~Walnut, {\em Continuity properties of the Gabor frame operator},
J. Math. Anal. Appl. {\bf 165} (1992) 479--504.
\end{thebibliography}
\end{document}